\newtheorem{theorem}{Theorem}           
\newtheorem{corollary}{Corollary}
\theoremstyle{definition}
\newtheorem{example}{Example}
\newtheorem{remark}{Remark}
\newtheorem{prop}{Proposition}
\begin{document}

\title[Estimates for Tsallis relative operator entropy]{Estimates for Tsallis relative operator entropy}


\author{Hamid Reza Moradi, Shigeru Furuichi and Nicu\c sor Minculete}

\address{Young Researchers and Elite Club, Mashhad Branch, Islamic Azad University, Mashhad, Iran.\\
\email{hrmoradi@mshdiau.ac.ir}}

\address{Department of Information Science, College of Humanities and Sciences, Nihon University, 3-25-40, Sakurajyousui, Setagaya-ku, Tokyo, 156-8550, Japan.\\
\email{furuichi@chs.nihon-u.ac.jp}}

\address{Faculty of Mathematics and Computer Science, Transilvania University of Bra\c sov, Str. Iuliu Maniu, nr. 50, Bra\c sov, 500091, Romania.\\
\email{minculeten@yahoo.com}}

\CorrespondingAuthor{Shigeru Furuichi}


\date{04.03.2017}                               

\keywords{Relative operator entropy; Tsallis relative operator entropy; operator inequality; Hermite-Hadamard's inequality; Young's inequality}

\subjclass{Primary 47A63, Secondary 46L05, 47A60}

\thanks{The authors would like to thank editor and referee for providing valuable comments to improve our manuscript. The author (S.F.) was partially supported by JSPS KAKENHI Grant Number 16K05257} 

\begin{abstract}
We give the tight bounds of Tsallis relative operator entropy by using Hermite-Hadamard's inequality. Some reverse inequalities related to Young inequalities are also given. In addition, operator inequalities for normalized positive linear map with Tsallis relative operator entropy are given.
\end{abstract}

\maketitle



\section{Introduction and Preliminaries}

The operator theory related to inequalities in Hilbert space is studied in many papers. Let $A, B$ be two operators in a Hilbert space $\mathscr{H}$. An operator $A$ is said to be strictly positive (denoted by $A>0$) if $A$ is positive and invertible. For two strictly positive operators $A,B$ and $p \in \left[ 0,1 \right]$, $p $-power mean $A{{\#}_{p }}B$ is defined by 
\[A{{\#}_{p}}B:={{A}^{\frac{1}{2}}}{{\left( {{A}^{-\frac{1}{2}}}B{{A}^{-\frac{1}{2}}} \right)}^{p }}{{A}^{\frac{1}{2}}},\] 
and we remark that $A{{\#}_{p }}B={{A}^{1-p }}{{B}^{p }}$ if $A$ commutes with $B$. 
We also use the symbol $A\natural_r B :={{A}^{\frac{1}{2}}}{{\left( {{A}^{-\frac{1}{2}}}B{{A}^{-\frac{1}{2}}} \right)}^{r }}{{A}^{\frac{1}{2}}}$ for $r \in \mathbb{R}$.
The weighted operator arithmetic mean is defined by
\[A{{\nabla }_{p}}B:=\left( 1-p \right)A+pB,\] 
for any $p\in \left[ 0,1 \right]$.

The relative operator entropy $S\left( A|B \right)$ in \cite{1} is defined by
\[S\left( A|B \right):={{A}^{\frac{1}{2}}}\left( \log {{A}^{-\frac{1}{2}}}B{{A}^{-\frac{1}{2}}} \right){{A}^{\frac{1}{2}}},\] 
where $A$ and $B$ are two invertible positive operators on a Hilbert space. As a parametric extension of the relative operator entropy, Yanagi, Kuriyama and Furuichi \cite{4} defined Tsallis relative operator entropy which is an operator version of Tsallis relative entropy in quantum system due to Abe \cite{abe}, also see \cite{2}:
\[{{T}_{p}}\left( A|B \right):=\frac{{{A}^{\frac{1}{2}}}{{\left( {{A}^{-\frac{1}{2}}}B{{A}^{-\frac{1}{2}}} \right)}^{p}}{{A}^{\frac{1}{2}}}-A}{p},\quad p\in \left( 0,1 \right].\]
Notice that ${{T}_{p }}\left( A|B \right)$ can be written by using the notation of $A{{\#}_{p}}B$ as follows: 
\begin{equation*}
{{T}_{p}}\left( A|B \right):=\frac{A{{\#}_{p}}B-A}{p},\quad p\in \left( 0,1 \right].
\end{equation*}

The relation between relative operator entropy $S\left( A|B \right)$ and Tsallis relative operator entropy ${{T}_{p}}\left( A|B \right)$ was considered in \cite{2,zou}, as follows:
\begin{equation}\label{39}
A-A{{B}^{-1}}A\le {{T}_{-p}}\left( A|B \right)\le S\left( A|B \right)\le {{T}_{p}}\left( A|B \right)\le B-A.
\end{equation}

Some deeper properties of Tsallis relative operator entropy were proved in \cite{7,8,9,zou}.
\vskip 0.3 true cm
The main result of the present paper is a set of bounds that are complementary to \eqref{39}. Some of our inequalities improve well-known ones. Among other inequalities, it is shown that if $A,B$ are invertible positive operators and $p\in \left( 0,1 \right]$, then
\[\begin{aligned}
   {{A}^{\frac{1}{2}}}{{\left( \frac{{{A}^{-\frac{1}{2}}}B{{A}^{-\frac{1}{2}}}+I}{2} \right)}^{p-1}}\left( {{A}^{-\frac{1}{2}}}B{{A}^{-\frac{1}{2}}}-I \right){{A}^{\frac{1}{2}}}&\le {{T}_{p}}\left( A|B \right) \\ 
 & \le \frac{1}{2}\left( A{{\#}_{p}}B-A{{\natural}_{p-1}}B+B-A \right),  
\end{aligned}\]
which is a considerable refinement of \eqref{39}, where $I$ is the identity operator.
We also  prove a reverse inequality involving Tsallis relative operator entropy ${{T}_{p}}\left( A|B \right)$.

\section{Refinements of the inequalities (\ref{39}) via Hermite-Hadamard inequality}
An important ingredient in our approach is the following:
 
Let $f$ be a convex function on $\left[ a,b \right]\subseteq \mathbb{R}$; the well-known Hermite-Hadamard's inequality  can be expressed as
\begin{equation} \label{original_HH_ineq}
f\left( \frac{a+b}{2} \right)\le \frac{1}{b-a}\int\limits_{a}^{b}{f\left( t \right)dt}\le \frac{f\left( a \right)+f\left( b \right)}{2}.
\end{equation} 
\begin{theorem}\label{9}
For any invertible positive operator $A$ and $B$ such that $A\le B$, and $p\in \left( 0,1 \right]$ we have
\begin{equation}\label{3}
\begin{aligned}
   {{A}^{\frac{1}{2}}}{{\left( \frac{{{A}^{-\frac{1}{2}}}B{{A}^{-\frac{1}{2}}}+I}{2} \right)}^{p-1}}\left( {{A}^{-\frac{1}{2}}}B{{A}^{-\frac{1}{2}}}-I \right){{A}^{\frac{1}{2}}}&\le {{T}_{p}}\left( A|B \right) \\ 
 & \le \frac{1}{2}\left( A{{\#}_{p}}B-A{{\natural}_{p-1}}B+B-A \right).  
\end{aligned}
\end{equation}
\end{theorem}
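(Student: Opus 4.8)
The plan is to reduce the operator inequality \eqref{3} to a one-variable scalar inequality through the functional calculus, the scalar inequality itself coming from Hermite--Hadamard's inequality \eqref{original_HH_ineq} applied to a well-chosen convex function. First I would put $X := A^{-\frac{1}{2}}BA^{-\frac{1}{2}}$ and note that the hypothesis $A \le B$ forces $I \le X$, so the spectrum of $X$ is contained in $[1,\infty)$. The reformulation that drives everything is the elementary identity $\frac{x^p-1}{p} = \int_1^x t^{p-1}\,dt$, valid for $x \ge 1$, which exhibits the central object $T_p(A|B) = A^{\frac{1}{2}}\,\frac{X^p-I}{p}\,A^{\frac{1}{2}}$ as an integral of $f(t) = t^{p-1}$.

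Next I would check that $f(t) = t^{p-1}$ is convex on $[1,\infty)$: since $p \in (0,1]$ one computes $f''(t) = (p-1)(p-2)t^{p-3} \ge 0$. Applying \eqref{original_HH_ineq} on the interval $[1,x]$ for a fixed $x \ge 1$ then gives $\left(\tfrac{1+x}{2}\right)^{p-1} \le \frac{1}{x-1}\int_1^x t^{p-1}\,dt \le \frac{1+x^{p-1}}{2}$. Multiplying through by $x-1 \ge 0$ and invoking the identity above converts this into the purely scalar statement $(x-1)\left(\tfrac{1+x}{2}\right)^{p-1} \le \frac{x^p-1}{p} \le \tfrac{1}{2}\bigl(x^p - x^{p-1} + x - 1\bigr)$.

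The last step is to promote this scalar chain to operators. Because each term on both sides is a function of the single variable $x$, substituting $X$ for $x$ via the functional calculus preserves order and, crucially, sidesteps any noncommutativity: the factors $\left(\tfrac{X+I}{2}\right)^{p-1}$ and $X-I$ commute, being functions of the same operator $X$. Conjugating the resulting operator inequality by $A^{\frac{1}{2}}$ (a congruence, hence order-preserving) and reading off $A^{\frac{1}{2}}X^pA^{\frac{1}{2}} = A\#_p B$, $A^{\frac{1}{2}}X^{p-1}A^{\frac{1}{2}} = A\natural_{p-1}B$, $A^{\frac{1}{2}}XA^{\frac{1}{2}} = B$ and $A^{\frac{1}{2}}IA^{\frac{1}{2}} = A$ then yields exactly \eqref{3}.

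I expect the only delicate point to be bookkeeping rather than anything conceptual: one must keep the interval $[1,x]$ correctly oriented, and this is precisely where the assumption $A \le B$ (equivalently $X \ge I$) is indispensable, since multiplying the Hermite--Hadamard estimate by the factor $x-1$ would reverse both inequalities if $x < 1$. Beyond that, the argument rests only on the standard facts that congruence transformations and monotone functional calculus respect the operator order.
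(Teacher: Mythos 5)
Your proposal is correct and follows essentially the same route as the paper: both apply the Hermite--Hadamard inequality to the convex function $f(t)=t^{p-1}$ on $[1,x]$, use the identity $\int_1^x t^{p-1}\,dt=\frac{x^p-1}{p}$, and then pass to operators via the functional calculus applied to $X=A^{-\frac{1}{2}}BA^{-\frac{1}{2}}\ge I$ followed by congruence with $A^{\frac{1}{2}}$. Your additional remarks on the commutativity of functions of the same operator and on where $A\le B$ is used are accurate but do not change the argument.
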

\begin{proof}
Consider the function $f\left( t \right)={{t}^{p-1}},\text{ }p\in \left( 0,1 \right]$. It is easy to check that $f\left( t \right)$ is convex on $\left[ 1, \infty  \right)$. Bearing in mind the fact
	\[\int\limits_{1}^{x}{{{t}^{p-1}}}dt=\frac{{{x}^{p}}-1}{p},\] 
and utilizing the left-hand side of Hermite-Hadamard inequality, one can see that  
\begin{equation}\label{1}
{{\left( \frac{x+1}{2} \right)}^{p-1}}\left( x-1 \right)\le \frac{{{x}^{p}}-1}{p},
\end{equation}
where $x\ge 1$ and $p\in \left( 0,1 \right]$. On the other hand, it follows from the right-hand side of Hermite-Hadamard inequality that
\begin{equation}\label{01}
\frac{{{x}^{p}}-1}{p}\le \left( \frac{{{x}^{p-1}}+1}{2} \right)\left( x-1 \right),
\end{equation}
for each $x\ge 1$ and $p\in \left( 0,1 \right]$. 

Replacing $x$ by ${{A}^{-\frac{1}{2}}}B{{A}^{-\frac{1}{2}}}$ in \eqref{1} and \eqref{01},
and multiplying ${{A}^{\frac{1}{2}}}$ on both sides, we get the desired result \eqref{3}.
\end{proof}
\begin{remark}\label{2.5}
Simple calculation gives for all $x\ge 1$ and $p\in \left( 0,1 \right]$,
\begin{equation} \label{ineq_remark23}
0\le 1-\frac{1}{x}\le {{\left( \frac{x+1}{2} \right)}^{p-1}}\left( x-1 \right)\le \frac{{{x}^{p}}-1}{p}\le \left( \frac{{{x}^{p-1}}+1}{2} \right)\left( x-1 \right)\le x-1,
\end{equation}
which means 
\[\begin{aligned}
 0\le  A-A{{B}^{-1}}A&\le {{A}^{\frac{1}{2}}}{{\left( \frac{{{A}^{-\frac{1}{2}}}B{{A}^{-\frac{1}{2}}}+I}{2} \right)}^{p-1}}\left( {{A}^{-\frac{1}{2}}}B{{A}^{-\frac{1}{2}}}-I \right){{A}^{\frac{1}{2}}} \\ 
 & \le {{T}_{p}}\left( A|B \right) \\ 
 & \le \frac{1}{2}\left( A{{\#}_{p}}B-A{{\natural}_{p-1}}B+B-A \right) \\ 
 & \le B-A, \\ 
\end{aligned}\]
for any invertible positive operators $A$ and $B$ such that $A\le B$, and $p\in \left( 0,1 \right]$. 
Therefore, our inequalities \eqref{3} improve the inequalities \eqref{39} for the case $A \leq B.$

\end{remark}
 \begin{prop} \label{proposition2_1}
  For $x\ge 1$ and $\frac{1}{2} \leq p \leq 1$,
\begin{equation}\label{ineq01_remark2_6}
  \frac{x-1}{\sqrt{x}} \leq \left( \frac{x+1}{2}\right)^{p-1}(x-1).
\end{equation}
 \end{prop}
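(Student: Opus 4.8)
The plan is to reduce the claim to the worst case $p=\tfrac12$ and then to a trivial algebraic inequality. First I would dispose of the boundary case $x=1$, where both sides of \eqref{ineq01_remark2_6} vanish and equality holds. For $x>1$ the factor $x-1$ is strictly positive, so I would divide the inequality through by $x-1$, reducing the assertion to
\[
\frac{1}{\sqrt{x}}\le\left(\frac{x+1}{2}\right)^{p-1}.
\]

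Next I would exploit monotonicity in the parameter $p$. Since $x\ge 1$ we have $\frac{x+1}{2}\ge 1$, and for a base at least $1$ the map $s\mapsto\left(\frac{x+1}{2}\right)^{s}$ is nondecreasing. As $p$ ranges over $\left[\tfrac12,1\right]$ the exponent $p-1$ ranges over $\left[-\tfrac12,0\right]$, so the right-hand side attains its minimum over this parameter range at $p-1=-\tfrac12$, i.e.\ at $p=\tfrac12$. Hence it suffices to establish the single inequality $\dfrac{1}{\sqrt{x}}\le\left(\dfrac{x+1}{2}\right)^{-1/2}$, since proving it where the right-hand side is smallest yields it for all admissible $p$.

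Finally, this last inequality is equivalent, upon taking reciprocals and squaring (both sides being positive), to $\frac{x+1}{2}\le x$, that is $1\le x$, which holds by hypothesis. Reassembling the three steps—multiplying back through by the nonnegative factor $x-1$—gives \eqref{ineq01_remark2_6}.

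There is essentially no deep step here; the only point requiring a moment's care is the reduction to $p=\tfrac12$, which rests on the elementary but decisive observation that the exponent $p-1$ is non-positive while the base $\frac{x+1}{2}$ is at least $1$, so that the endpoint $p=\tfrac12$ of the parameter range produces the smallest value of the right-hand side and hence the tightest instance of the inequality to verify.
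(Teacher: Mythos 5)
Your proof is correct and follows essentially the same route as the paper's: both reduce to the worst case $p=\tfrac12$ by monotonicity of $\left(\frac{x+1}{2}\right)^{p-1}$ in $p$ (the paper via the sign of $\partial f_p/\partial p$, you via the elementary fact that a base $\ge 1$ raised to an increasing exponent is nondecreasing) and then verify that the $p=\tfrac12$ case collapses to $x\ge 1$. No substantive difference.
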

\begin{proof}
  In order to prove \eqref{ineq01_remark2_6}, we set the function ${{f}_{p}}\left( x \right)\equiv {{\left( \frac{x+1}{2} \right)}^{p-1}}-\frac{1}{\sqrt{x}}$ where $x\geq 1$ and $\frac{1}{2} \leq p \leq 1$. Since $\frac{df_p(x)}{dp} = \left( \frac{x+1}{2}\right)^{p-1} \ln \left( \frac{x+1}{2}\right)$, $\frac{df_p(x)}{dp} \geq 0$ for $x \geq 1$. Thus, we have $f_p(x) \geq f_{1/2}(x) =\frac{\sqrt{2x} -\sqrt{x+1}}{\sqrt{x(x+1)}} \geq 0 $ for $x \geq 1$. Therefore, we have the inequality (\ref{ineq01_remark2_6}).
 \end{proof}

\begin{remark}
The first inequality \eqref{3} gives tight lower bound for the Tsallis relative operator entropy $T_p(A|B)$ more than the eighth inequality in {{\cite[Theorem 2.8 (i)]{pre}}}, due to Proposition \ref{proposition2_1}.
\end{remark}

\begin{corollary} \label{corollary2_1}
For any invertible positive operators $A$ and $B$ such that $A \geq B$, and $p \in (0,1]$, we have
\begin{equation*}
\begin{aligned}
A{{\#}_{p}}B-A{{\natural}_{p-1}}B & \le
\frac{1}{2}\left( A{{\#}_{p}}B-A{{\natural}_{p-1}}B+B-A \right)  \le {{T}_{p}}\left( A|B \right) \\ 
 & \le {{A}^{\frac{1}{2}}}{{\left( \frac{{{A}^{-\frac{1}{2}}}B{{A}^{-\frac{1}{2}}}+I}{2} \right)}^{p-1}}\left( {{A}^{-\frac{1}{2}}}B{{A}^{-\frac{1}{2}}}-I \right){{A}^{\frac{1}{2}}}\le  A{{\natural}_{p+1}}B-A{{\#}_{p}}B \le 0.  
\end{aligned}
\end{equation*}
\end{corollary}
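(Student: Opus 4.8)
The plan is to mirror the scalar-to-operator scheme of Theorem \ref{9} and Remark \ref{2.5}, but in the complementary spectral regime. Since $A \ge B > 0$, the operator $X := A^{-\frac{1}{2}}BA^{-\frac{1}{2}}$ satisfies $0 < X \le I$, so the relevant scalar variable now ranges over $x \in (0,1]$ rather than $x \ge 1$. I would first assemble a chain of scalar inequalities valid for $x \in (0,1]$ and $p \in (0,1]$, namely
\[ x^p - x^{p-1} \le \tfrac{1}{2}\left( x^p - x^{p-1} + x - 1 \right) \le \frac{x^p - 1}{p} \le \left( \frac{x+1}{2} \right)^{p-1}(x-1) \le x^{p+1} - x^p \le 0, \]
and then substitute $x \mapsto X$ and conjugate by $A^{\frac{1}{2}}$, using $A^{\frac{1}{2}}X^r A^{\frac{1}{2}} = A\natural_r B$ (in particular $A^{\frac{1}{2}}X^p A^{\frac{1}{2}} = A\#_p B$, $A^{\frac{1}{2}}X A^{\frac{1}{2}} = B$, $A^{\frac{1}{2}}A^{\frac{1}{2}} = A$) together with $\frac{1}{p}(A\#_p B - A) = T_p(A|B)$ to read off each term of the corollary. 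Note that $\frac{x^{p-1}+1}{2}(x-1) = \frac{1}{2}(x^p - x^{p-1} + x - 1)$, which is what lands on the term $\frac{1}{2}(A\#_p B - A\natural_{p-1}B + B - A)$.

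The two central inequalities are obtained exactly as in Theorem \ref{9}, but with the roles of the endpoints exchanged. The function $f(t) = t^{p-1}$ is convex on all of $(0,\infty)$ because $f''(t) = (p-1)(p-2)t^{p-3} > 0$ for $p \in (0,1]$, so I apply Hermite--Hadamard \eqref{original_HH_ineq} on the interval $[x,1]$ with $x < 1$. With $\int_x^1 t^{p-1}\,dt = \frac{1-x^p}{p}$ and $b - a = 1 - x > 0$, the left and right halves of \eqref{original_HH_ineq} give $\left( \frac{x+1}{2} \right)^{p-1}(1-x) \le \frac{1-x^p}{p}$ and $\frac{1-x^p}{p} \le \frac{x^{p-1}+1}{2}(1-x)$; multiplying through by $-1$ (equivalently rewriting $1-x = -(x-1)$ and $\frac{1-x^p}{p} = -\frac{x^p-1}{p}$) flips both and yields precisely the two middle inequalities above, which are the reversals of \eqref{1} and \eqref{01}. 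The remaining bounds are elementary: the first reduces, after factoring $x^p - x^{p-1} = x^{p-1}(x-1)$ and dividing by $x - 1 \le 0$, to $x^{p-1} \ge 1$, which holds since $x \le 1$ and $p - 1 \le 0$; the fourth reduces, after dividing by $x - 1 \le 0$, to $\left( \frac{x+1}{2} \right)^{p-1} \ge x^p$, which follows on taking logarithms from $(p-1)\ln\frac{x+1}{2} \ge 0 \ge p\ln x$ (both factors on the left are $\le 0$, while $\ln x \le 0$); and the last is immediate from $x^{p+1} - x^p = x^p(x-1) \le 0$.

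The genuine subtlety, and the step I would watch most carefully, is the sign bookkeeping. Every passage from the Hermite--Hadamard output to a usable bound, as well as the two outer reductions, involves multiplying or dividing by the factor $x - 1$, which is now \emph{nonpositive}; this is exactly what reverses the direction of each inequality relative to the $A \le B$ case and converts the lower bound of Theorem \ref{9} into an upper bound here, and vice versa. Once the scalar chain is established with the correct orientation, the operator statement follows formally: applying the continuous functional calculus to $X$ (whose spectrum lies in $(0,1]$) preserves each scalar inequality, and conjugation by the positive invertible operator $A^{\frac{1}{2}}$ preserves operator order, so the scalar inequalities transform termwise into the asserted chain.
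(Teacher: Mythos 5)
Your proof is correct. It differs from the paper's in route, though not in spirit: the paper disposes of this corollary in one line by substituting $t=1/x$ into the already-established chain \eqref{ineq_remark23} for $x\ge 1$ (which, to land on the stated terms, must implicitly be combined with multiplying the whole chain by the negative factor $-t^{p}$, reversing every inequality and converting, e.g., $1-x^{-1}$ into $t^{p+1}-t^{p}$ and $\bigl(\tfrac{x^{p-1}+1}{2}\bigr)(x-1)$ into $\tfrac12(t^{p}-t^{p-1}+t-1)$). You instead re-derive the scalar chain from scratch on $(0,1]$ by applying Hermite--Hadamard to $t\mapsto t^{p-1}$ on $[x,1]$ and proving the two outer bounds directly ($x^{p-1}\ge 1$ and $\bigl(\tfrac{x+1}{2}\bigr)^{p-1}\ge x^{p}$ for $x\le 1$), then passing to operators by functional calculus on $X=A^{-1/2}BA^{-1/2}\le I$ and conjugation by $A^{1/2}$. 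What your version buys is self-containedness and explicit sign bookkeeping --- it makes visible exactly why the roles of upper and lower bounds exchange when $A\ge B$, a point the paper's one-line proof leaves to the reader; what the paper's version buys is brevity and the observation that the two regimes are related by the involution $x\mapsto 1/x$ together with homogeneity. Both arguments are sound and yield the same chain.
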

\begin{proof}
Put $t = \frac{1}{x} \leq 1$ in the inequalities (\ref{ineq_remark23}).
\end{proof}

\begin{remark}
By numerical computations, we have  $\left( \frac{x^{p-1} +1}{2}\right)(x-1)- \frac{2(x-1)}{x+1} \simeq -0.83219$ for $p=\frac{2}{3}, x=0.01$, and we also have   $\left( \frac{x^{p-1} +1}{2}\right)(x-1)- \frac{2(x-1)}{x+1} \simeq 0.216868$ for $p=\frac{2}{3}, x=0.1$.
Thus we do not have ordering between $\left( \frac{x^{p-1} +1}{2}\right)(x-1)$ and $\frac{2(x-1)}{x+1}$ for $0<x\leq 1$ and $\frac{1}{2} \leq p \leq 1$ so that
there is no ordering between the second inequality in Corollary \ref{corollary2_1} and the sixth inequality in {{\cite[Theorem 2.8 (ii)]{pre}}}. 
\end{remark}




\begin{theorem}
For any invertible positive operators $A$ and $B$ such that $A\leq B$ and $p\in \left( 0,1 \right]$, we have the following inequalities,
\begin{equation} \label{ineq03_remark23}
{L_p}\left( {A,B} \right) + {K_p}\left( {A,B} \right) \le {T_p}\left( {A|B} \right) \le {R_p}\left( {A,B} \right) + {K_p}\left( {A,B} \right),
\end{equation}
and
\begin{equation} \label{ineq04_remark23}
{J_p}\left( {A,B} \right) - 2{R_p}\left( {A,B} \right) \le {T_p}\left( {A|B} \right) \le {J_p}\left( {A,B} \right) - 2{L_p}\left( {A,B} \right),
\end{equation}
where
\[\begin{array}{l}
{K_p}\left( {A,B} \right) \equiv {A^{1/2}}{\left( {\frac{{{A^{ - 1/2}}B{A^{ - 1/2}} + I}}{2}} \right)^{p - 1} \left( A^{-1/2}BA^{-1/2} -I\right)}{A^{1/2}},\\
{J_p}\left( {A,B} \right) \equiv \frac{1}{2}\left( {A{\# _p}B - A{\natural _{p - 1}}B + B - A} \right),\\
{L_p}\left( {A,B} \right) \equiv \frac{1}{{24}}\left( {p - 1} \right)\left( {p - 2} \right)\left( {A{\# _p}B - 3A{\natural _{p - 1}}B + 3A{\natural _{p - 2}}B - A{\natural _{p - 3}}B} \right),\\
{R_p}\left( {A,B} \right) \equiv \frac{1}{{24}}\left( {p - 1} \right)\left( {p - 2} \right)\left( {A{\natural _3}B - 3A{\natural _2}B + 3B - A} \right).
\end{array}\]
\end{theorem}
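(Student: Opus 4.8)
The plan is to mirror the proof of Theorem \ref{9}: first establish the corresponding scalar inequalities for the convex function $f(t)=t^{p-1}$ on $[1,x]$ with $x \ge 1$ and $p \in (0,1]$, and then lift them to operators by substituting $x \mapsto A^{-1/2}BA^{-1/2}$ (whose spectrum lies in $[1,\infty)$ because $A \le B$ forces $A^{-1/2}BA^{-1/2} \ge I$) and conjugating by $A^{1/2}$. The new ingredient compared with Theorem \ref{9} is that I would replace the bare Hermite--Hadamard inequality by its refined form carrying explicit error terms expressed through $f''$.

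Concretely, I would start from the integral-remainder (Peano kernel) representations of the midpoint and trapezoidal quadrature errors on $[1,x]$,
\[\int_1^x f(t)\,dt - (x-1)\,f\!\left(\tfrac{x+1}{2}\right) = \int_1^x P_M(t)\,f''(t)\,dt,\]
\[\tfrac{x-1}{2}\bigl(f(1)+f(x)\bigr) - \int_1^x f(t)\,dt = \int_1^x P_T(t)\,f''(t)\,dt,\]
where $P_M, P_T \ge 0$ with $\int_1^x P_M = \tfrac{(x-1)^3}{24}$ and $\int_1^x P_T = \tfrac{(x-1)^3}{12}$. For $f(t)=t^{p-1}$ we have $f''(t)=(p-1)(p-2)t^{p-3} \ge 0$, and since $p-3<0$ this second derivative is non-increasing on $[1,\infty)$, so $f''(x) \le f''(t) \le f''(1)$ throughout $[1,x]$. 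Bounding $f''$ by its endpoint values inside the two integrals and using $\int_1^x f = \tfrac{x^p-1}{p}$ yields
\[\tfrac{(x-1)^3}{24}f''(x) \le \tfrac{x^p-1}{p} - (x-1)\!\left(\tfrac{x+1}{2}\right)^{p-1} \le \tfrac{(x-1)^3}{24}f''(1),\]
\[\tfrac{(x-1)^3}{12}f''(x) \le \tfrac{x-1}{2}\bigl(1+x^{p-1}\bigr) - \tfrac{x^p-1}{p} \le \tfrac{(x-1)^3}{12}f''(1).\]

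It then remains to recognize these scalar expressions as the functions obtained from the operators in the statement by replacing $A^{-1/2}BA^{-1/2}$ with $x$. Expanding $f''(x)=(p-1)(p-2)x^{p-3}$ gives $\tfrac{(x-1)^3}{24}f''(x)=\tfrac{1}{24}(p-1)(p-2)\bigl(x^p-3x^{p-1}+3x^{p-2}-x^{p-3}\bigr)$, the scalar form of $L_p(A,B)$, while $\tfrac{(x-1)^3}{24}f''(1)=\tfrac{1}{24}(p-1)(p-2)(x-1)^3$ is that of $R_p(A,B)$; similarly $(x-1)\bigl(\tfrac{x+1}{2}\bigr)^{p-1}$ and $\tfrac{x-1}{2}(1+x^{p-1})=\tfrac12(x^p-x^{p-1}+x-1)$ are the scalar forms of $K_p(A,B)$ and $J_p(A,B)$. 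Rearranging the first displayed double inequality produces the scalar counterpart of (\ref{ineq03_remark23}), and rearranging the second (noting $\tfrac{(x-1)^3}{12}=2\cdot\tfrac{(x-1)^3}{24}$) produces that of (\ref{ineq04_remark23}). The functional-calculus substitution and conjugation by $A^{1/2}$, carried out exactly as in Theorem \ref{9}, then give the stated operator inequalities.

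The step I expect to be the main obstacle lies entirely at the scalar level: verifying the two-sided quadrature-error bounds in terms of the endpoint values of $f''$ --- that is, checking the non-negativity of the Peano kernels and computing their exact masses $\tfrac{(x-1)^3}{24}$ and $\tfrac{(x-1)^3}{12}$ --- and then confirming that the resulting polynomials in $x$ condense precisely into the four prescribed combinations $K_p, J_p, L_p, R_p$. Once these scalar inequalities are secured, the passage to operators is routine, since $A \le B$ guarantees that the spectrum of $A^{-1/2}BA^{-1/2}$ lies in $[1,\infty)$.
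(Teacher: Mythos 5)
Your proposal is correct and follows essentially the same route as the paper: the paper applies $f(t)=t^{p-1}$ with $a=1$, $b=x$ to the refined Hermite--Hadamard estimates $m\frac{(b-a)^2}{24}\le\frac{1}{b-a}\int_a^b f - f(\frac{a+b}{2})\le M\frac{(b-a)^2}{24}$ and $m\frac{(b-a)^2}{12}\le\frac{f(a)+f(b)}{2}-\frac{1}{b-a}\int_a^b f\le M\frac{(b-a)^2}{12}$ (cited from Niculescu--Persson), takes $m=f''(x)$, $M=f''(1)$, and lifts to operators exactly as in Theorem \ref{9}. The only difference is that you re-derive these quadrature-error bounds via Peano kernels rather than citing them, and your kernel masses, endpoint bounds on $f''$, and identification of the resulting polynomials with $K_p$, $J_p$, $L_p$, $R_p$ all check out.
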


\begin{proof}
According to {{\cite[Theorem 1]{cn}}}, if $f:\left[ a,b \right]\to \mathbb{R}$ is a twice differentiable function that there exists real constants $m$ and $M$ so that $m\le f''\le M$, then 
\begin{equation} \label{ineq01_remark23}
m\frac{{{\left( b-a \right)}^{2}}}{24}\le \frac{1}{b-a}\int\limits_{a}^{b}{f\left( t \right)dt}-f\left( \frac{a+b}{2} \right)\le M\frac{{{\left( b-a \right)}^{2}}}{24}, 
\end{equation}
\begin{equation} \label{ineq02_remark23}
m\frac{{{\left( b-a \right)}^{2}}}{12}\le \frac{f\left( a \right)+f\left( b \right)}{2}-\frac{1}{b-a}\int\limits_{a}^{b}{f\left( t \right)dt}\le M\frac{{{\left( b-a \right)}^{2}}}{12}.
\end{equation}
Putting $f(t)=t^{p-1}$ with $p\in (0,1]$ and $a=1$, $b=x$ in the above inequalities, then we have the desired results by a
similar way to the proof of Theorem \ref{9}.
\end{proof}

\begin{remark}
The first inequality of (\ref{ineq03_remark23}) and the second inequality of (\ref{ineq04_remark23}) give  tighter bounds of  Tsallis relative entropy $T_p(A|B)$ than those in the inequalities (\ref{3}), because of the following reasons.
\begin{itemize}
\item[(i)] The first inequality of (\ref{ineq01_remark23}) gives tight lower bound more than the first inequality of Hermite-Hadamard's inequality (\ref{original_HH_ineq}).
\item[(ii)] The first inequality of (\ref{ineq02_remark23}) gives tight upper bound more than the second inequality of Hermite-Hadamard's inequality (\ref{original_HH_ineq}).
\end{itemize}
\end{remark}

\section{Some reverse inequalities via Young type inequalities}

The scalar Young's inequality says that if $a,b>0$, then
\[{{a}^{1-p}}{{b}^{p}}\le \left( 1-p \right)a+pb,\quad p\in \left[ 0,1 \right].\]
The following inequalities provide a refinement and a multiplicative reverse for Young's inequality with Kantorovich constant:
\begin{equation}\label{4}
{{K}^{r}}\left( h,2 \right){{a}^{1-p}}{{b}^{p}}\le \left( 1-p \right)a+pb\le {{K}^{R}}\left( h,2 \right){{a}^{1-p}}{{b}^{p}},
\end{equation}
where $a,b>0$, $p\in \left[ 0,1 \right]$, $r=\min \left\{ p,1-p \right\}$, $R=\max \left\{ p,1-p \right\}$, $K\left( h,2 \right)=\frac{{{\left( h+1 \right)}^{2}}}{4h}$ and $h=\frac{b}{a}$.  Notice that the first inequality in \eqref{4} was obtained by Zou {\it et al.} in {{\cite[Corollary 3]{21}}} while the second was obtained by Liao {\it et al.} {{\cite[Corollary 2.2]{20}}}.

In \cite{k1,k2}, Kittaneh and Manasrah obtained another refinement and reverse of Young's inequality:
\begin{equation}\label{25}
r{{\left( \sqrt{a}-\sqrt{b} \right)}^{2}}+{{a}^{1-p}}{{b}^{p}}\le \left( 1-p \right)a+pb\le R{{\left( \sqrt{a}-\sqrt{b} \right)}^{2}}+{{a}^{1-p}}{{b}^{p}},
\end{equation}
where $a,b>0$, $p\in \left[ 0,1 \right]$, $r=\min \left\{ p,1-p \right\}$, $R=\max \left\{ p,1-p \right\}$. Further refinements and generalizations of Young's inequality  have been obtained in \cite{al,mi}.

More interesting things happen when we apply these considerations to the operators. For instance, from the inequality \eqref{4} it follows that:
\begin{prop}\label{prop}
Let $A,B$ be two invertible positive operators such that $I<h'I\le {{A}^{-\frac{1}{2}}}B{{A}^{-\frac{1}{2}}}\le hI$ or $0<hI\le {{A}^{-\frac{1}{2}}}B{{A}^{-\frac{1}{2}}}\le h'I<I$, then 
\begin{equation}\label{23}
{{K}^{r}}\left( h',2 \right)A{{\#}_{p}}B\le A{{\nabla }_{p}}B\le {{K}^{R}}\left( h,2 \right)A{{\#}_{p}}B,
\end{equation}
where $p\in \left[ 0,1 \right]$, $r=\min \left\{ p,1-p \right\}$, $R=\max \left\{ p,1-p \right\}$.  
\end{prop}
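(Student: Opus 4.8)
The plan is to transfer the scalar double inequality \eqref{4} to the operator setting by normalizing, controlling the Kantorovich constant by its extreme values via monotonicity, and then invoking the functional calculus together with a congruence transformation.

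First I would put $a=1$ and $b=x>0$ in \eqref{4}, so that $h=b/a=x$ and $a^{1-p}b^p=x^p$. This turns \eqref{4} into the scalar inequality
\[K^r(x,2)\,x^p\le (1-p)+px\le K^R(x,2)\,x^p,\qquad x>0,\]
in which the Kantorovich factor $K(x,2)=\frac{(x+1)^2}{4x}$ is now a function of the single variable $x$.

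The key step is to replace the variable factor $K(x,2)$ by the fixed scalars appearing in \eqref{23}. Writing $K(x,2)=\frac{x}{4}+\frac12+\frac{1}{4x}$, one sees that $K'(x,2)=\frac{x^2-1}{4x^2}$, so $K(\cdot,2)$ attains its minimum value $1$ at $x=1$, is strictly decreasing on $(0,1)$ and strictly increasing on $(1,\infty)$. In either hypothesis of the proposition the endpoint $h'$ is the one nearer to $1$, and therefore on the relevant spectral interval (which is $[h',h]\subset(1,\infty)$ in the first case and $[h,h']\subset(0,1)$ in the second) one has $K(h',2)\le K(x,2)\le K(h,2)$. Since $r=\min\{p,1-p\}\ge0$ and $R=\max\{p,1-p\}\ge0$, and $t\mapsto t^r$, $t\mapsto t^R$ are non-decreasing on $(0,\infty)$, this yields $K^r(h',2)\le K^r(x,2)$ and $K^R(x,2)\le K^R(h,2)$. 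Combining these with the displayed scalar inequality gives the fixed-constant estimate
\[K^r(h',2)\,x^p\le (1-p)+px\le K^R(h,2)\,x^p\]
valid for every $x$ in the relevant interval.

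Finally I would appeal to the functional calculus. Under either hypothesis the spectrum of $X:=A^{-1/2}BA^{-1/2}$ is contained in the relevant interval, so the last scalar inequality lifts to the operator inequality
\[K^r(h',2)\,X^p\le (1-p)I+pX\le K^R(h,2)\,X^p.\]
Applying the congruence $T\mapsto A^{1/2}TA^{1/2}$, which preserves operator order, and using the identities $A^{1/2}X^pA^{1/2}=A\#_pB$, $A^{1/2}XA^{1/2}=B$ and $A^{1/2}IA^{1/2}=A$, one recovers exactly \eqref{23}, since $(1-p)A+pB=A\nabla_pB$. The only genuinely delicate point is the monotonicity analysis of $K(\cdot,2)$ and the bookkeeping that matches $h'$ (the endpoint nearer $1$, giving the smaller constant) with the lower bound and $h$ (the endpoint farther from $1$, giving the larger constant) with the upper bound; once the Kantorovich factor is frozen to a scalar, the functional-calculus and congruence steps are entirely routine.
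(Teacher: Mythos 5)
Your proposal is correct and follows essentially the same route as the paper's own proof: substitute $a=1$, $b=x$ into \eqref{4}, bound the variable Kantorovich factor by its extremes on the spectral interval using the monotonicity of $K(\cdot,2)$ on $(1,\infty)$ and on $(0,1)$, lift via functional calculus with $T=A^{-1/2}BA^{-1/2}$, and conjugate by $A^{1/2}$. Your treatment is in fact slightly more explicit than the paper's (e.g.\ noting that $t\mapsto t^r$ and $t\mapsto t^R$ must be non-decreasing to pass from $K$ to $K^r$ and $K^R$), but the argument is the same.
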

\begin{proof}
The choice $a=1$ and $b=x$ reduces \eqref{4} to the inequality
\[{{K}^{r}}\left( x,2 \right){{x}^{p}}\le \left( 1-p \right)+px\le {{K}^{R}}\left( x,2 \right){{x}^{p}}.\]
Whence
\begin{equation}\label{21}
\underset{1<h'\le x\le h}{\mathop{\min }}\,{{K}^{r}}\left( x,2 \right){{T}^{p}}\le \left( 1-p \right)I+pT\le \underset{1<h'\le x\le h}{\mathop{\max }}\,{{K}^{R}}\left( x,2 \right){{T}^{p}},
\end{equation}
for any positive operator $T$ such that $I<h'I\le T\le hI$. On choosing $T={{A}^{-\frac{1}{2}}}B{{A}^{-\frac{1}{2}}}$ in \eqref{21} we get
\[\underset{1<h'\le x\le h}{\mathop{\min }}\,{{K}^{r}}\left( x,2 \right){{\left( {{A}^{-\frac{1}{2}}}B{{A}^{-\frac{1}{2}}} \right)}^{p}}\le \left( 1-p \right)I+p{{A}^{-\frac{1}{2}}}B{{A}^{-\frac{1}{2}}}\le \underset{1<h'\le x\le h}{\mathop{\max }}\,{{K}^{R}}\left( x,2 \right){{\left( {{A}^{-\frac{1}{2}}}B{{A}^{-\frac{1}{2}}} \right)}^{p}}.\]
Since $K\left( x,2 \right)$ is increasing for $x>1$ we can write
\begin{equation}\label{22}
{{K}^{r}}\left( h',2 \right){{\left( {{A}^{-\frac{1}{2}}}B{{A}^{-\frac{1}{2}}} \right)}^{p}}\le \left( 1-p \right)I+p{{A}^{-\frac{1}{2}}}B{{A}^{-\frac{1}{2}}}\le {{K}^{R}}\left( h,2 \right){{\left( {{A}^{-\frac{1}{2}}}B{{A}^{-\frac{1}{2}}} \right)}^{p}}.
\end{equation}
Multiplying both sides by ${{A}^{\frac{1}{2}}}$ to inequality \eqref{22}, we obtain the required inequality \eqref{23}.

Another case follows from the fact that $K\left( x,2 \right)$ is decreasing for $x<1$.   
\end{proof}
\vskip 0.3 true cm
Ando's inequality {{\cite[Theorem 3]{62}}} says that if $A,B$ are positive operators and $\Phi $ is a positive linear mapping, then
\begin{equation}\label{24}
\Phi \left( A{{\#}_{p}}B \right)\le \Phi \left( A \right){{\#}_{p}}\Phi \left( B \right),\quad p\in \left[ 0,1 \right].
\end{equation}
Concerning inequality \eqref{24}, we have the following corollary:
\begin{corollary}\label{7}
Let $A,B$ be two invertible positive operators such that $I<h'I\le {{A}^{-\frac{1}{2}}}B{{A}^{-\frac{1}{2}}}\le hI$ or $0<hI\le {{A}^{-\frac{1}{2}}}B{{A}^{-\frac{1}{2}}}\le h'I<I$. Let $\Phi $ be positive linear map on $\mathcal{B}\left( \mathscr{H} \right)$, then 
\begin{equation}\label{14}
\begin{aligned}
   \frac{{{K}^{r}}\left( h',2 \right)}{{{K}^{R}}\left( h,2 \right)}\Phi \left( A{{\#}_{p}}B \right)&\le \frac{1}{{{K}^{R}}\left( h,2 \right)}\Phi \left( A{{\nabla }_{p}}B \right) \\ 
 & \le \Phi \left( A \right){{\#}_{p}}\Phi \left( B \right) \\ 
 & \le \frac{1}{{{K}^{r}}\left( h',2 \right)}\Phi \left( A{{\nabla }_{p}}B \right) \\ 
 & \le \frac{{{K}^{R}}\left( h,2 \right)}{{{K}^{r}}\left( h',2 \right)}\Phi \left( A{{\#}_{p}}B \right),  
\end{aligned}
\end{equation}
where $p\in \left[ 0,1 \right]$, $r=\min \left\{ p ,1-p  \right\}$ and $R=\max \left\{ p,1-p \right\}$.
\end{corollary}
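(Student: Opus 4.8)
The plan is to reduce the whole chain \eqref{14} to two applications of Proposition \ref{prop}, one to the pair $(A,B)$ and one to the pair $(\Phi(A),\Phi(B))$, followed by nothing more than division by positive scalars. First I would record that the spectral hypothesis is equivalent to the two-sided operator bound $h'A\le B\le hA$ (with $h'>1$ in the first case, $h<1$ in the second), and then observe that this bound is preserved by $\Phi$: since $\Phi$ is positive and linear, $h'A\le B\le hA$ forces $h'\Phi(A)\le\Phi(B)\le h\Phi(A)$, that is, $h'I\le\Phi(A)^{-1/2}\Phi(B)\Phi(A)^{-1/2}\le hI$ with the \emph{same} constants $h,h'$ (here $\Phi(A),\Phi(B)$ are understood to be invertible positive, so that $\Phi(A)\#_p\Phi(B)$ is meaningful). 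This is the key observation, since it lets Proposition \ref{prop} be invoked verbatim for $(\Phi(A),\Phi(B))$.

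For the two outer inequalities I would apply \eqref{23} to $(A,B)$ and push the resulting estimate through $\Phi$. Using positivity of $\Phi$ together with linearity (so that the positive scalar factors $K^r(h',2)$ and $K^R(h,2)$ commute with $\Phi$), inequality \eqref{23} yields
\[K^r(h',2)\,\Phi(A\#_pB)\le\Phi(A\nabla_pB)\le K^R(h,2)\,\Phi(A\#_pB).\]
Dividing the left-hand estimate by the positive scalar $K^R(h,2)$ produces the first inequality of \eqref{14}, and dividing the right-hand estimate by $K^r(h',2)$ produces the last inequality of \eqref{14}.

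For the two inner inequalities I would apply Proposition \ref{prop} directly to $(\Phi(A),\Phi(B))$, which is legitimate by the first paragraph, together with the linearity identity $\Phi(A\nabla_pB)=\Phi(A)\nabla_p\Phi(B)$. This gives
\[K^r(h',2)\,\Phi(A)\#_p\Phi(B)\le\Phi(A\nabla_pB)\le K^R(h,2)\,\Phi(A)\#_p\Phi(B),\]
whence dividing the right- and left-hand halves by $K^R(h,2)$ and $K^r(h',2)$ respectively yields the second and third inequalities of \eqref{14}. Incidentally, this is the precise point at which Ando's inequality \eqref{24} enters naturally: combining $\tfrac{1}{K^R(h,2)}\Phi(A\nabla_pB)\le\Phi(A\#_pB)$, read off from the outer display, with $\Phi(A\#_pB)\le\Phi(A)\#_p\Phi(B)$ from \eqref{24} reproduces the second inequality, which is why the corollary is placed in the wake of \eqref{24}.

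The only genuine subtlety---the ``main obstacle''---is the transfer of the spectral bounds in the first paragraph, together with the bookkeeping of the two sign regimes ($h'>1$ versus $h<1$); the latter is dispatched exactly as in the proof of Proposition \ref{prop}, through the monotonicity of $x\mapsto K(x,2)$ on $(1,\infty)$ and on $(0,1)$, which is what selects the endpoint constants. Everything else is division by the positive scalars $K^r(h',2),K^R(h,2)$ and the linearity of $\Phi$, so no further analytic input is needed.
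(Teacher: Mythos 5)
Your proof is correct and follows essentially the same route as the paper: apply \eqref{23} to $(A,B)$ and push it through $\Phi$, then apply \eqref{23} to the pair $(\Phi(A),\Phi(B))$, and combine. The one place you go beyond the paper's write-up is in explicitly verifying that the spectral hypothesis $h'I\le A^{-1/2}BA^{-1/2}\le hI$ transfers to $(\Phi(A),\Phi(B))$ via $h'A\le B\le hA$ and the positivity of $\Phi$ --- a detail the paper leaves implicit but which is needed to invoke Proposition \ref{prop} for the image pair.
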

\begin{proof}
If we apply positive linear map $\Phi $ in \eqref{23} we infer
\begin{equation}\label{5}
{{K}^{r}}\left( h',2 \right)\Phi \left( A{{\#}_{p}}B \right)\le \Phi \left( A{{\nabla }_{p}}B \right)\le {{K}^{R}}\left( h,2 \right)\Phi \left( A{{\#}_{p}}B \right).
\end{equation}
On the other hand, if we take $A=\Phi \left( A \right)$ and $B=\Phi \left( B \right)$ in \eqref{23} we can write 
\begin{equation}\label{6}
{{K}^{r}}\left( h',2 \right)\Phi \left( A \right){{\#}_{p}}\Phi \left( B \right)\le \Phi \left( A{{\nabla }_{p}}B \right)\le {{K}^{R}}\left( h,2 \right)\Phi \left( A \right){{\#}_{p}}\Phi \left( B \right).
\end{equation}
Now, combining inequality \eqref{5}  and inequality \eqref{6}, we deduce the desired inequalities \eqref{14}.
\end{proof}
\begin{remark}
It is well-known that the generalized Kantorovich constant $K\left( h,p \right)$ {{\cite[Definition 1]{b}}} is defined by
\begin{equation}\label{ka}
K\left( h,p \right):=\frac{{{h}^{p}}-h}{\left( p-1 \right)\left( h-1 \right)}{{\left( \frac{p-1}{p}\frac{{{h}^{p}}-1}{{{h}^{p}}-h} \right)}^{p}},
\end{equation}
for all $p\in \mathbb{R}$. By virtue of a generalized Kantorovich constant, in the matrix setting, Bourin {\it et al.} in {{\cite[Theorem 6]{referee2}}} gave the following reverse of Ando's inequality for a positive linear map:

Let $A$ and $B$ be positive operators such that $mA\le B\le MA$, and let $\Phi $ be a positive linear map. Then 
\begin{equation}\label{c1}
\Phi \left( A \right){{\#}_{p}}\Phi \left( B \right)\le \frac{1}{K\left( {{h}},p \right)}\Phi \left( A{{\#}_{p}}B \right),\quad p\in \left[ 0,1 \right]
\end{equation}
where $h=\frac{M}{m}$. The above result naturally extends  one proved in Lee {{\cite[Theorem 4]{referee1}}} for $p=\frac{1}{2}$.

Of course the constant $\frac{{{K}^{R}}\left( h,2 \right)}{{{K}^{r}}\left( h',2 \right)}$ is not better than $\frac{1}{K\left( \frac{h}{h'},p \right)}$. Concerning the sharpness of the estimate \eqref{c1}, see {{\cite[Lemma 7]{referee2}}}. 
However our bounds on $\Phi(A)\#_p\Phi(B)$ are calculated by the original Kantrovich constant $K(h,2)$ without the generalized one $K(h,p)$. It is also interesting our bounds on $\Phi(A)\#_p\Phi(B)$ are expressed by $\Phi(A\nabla_p B)$  with only one constant either $h$ or $h'$. 
\end{remark}
\vskip 0.3 true cm
After discussion on inequalities related to the operator mean with positive linear map, we give a result on Tsallis relative operator entropy (which is the main theme in this paper) with a positive linear map.
It is well-known that Tsallis relative operator entropy has the following information monotonicity:
\begin{equation}\label{51}
\Phi \left( {{T}_{p}}\left( A|B \right) \right)\le {{T}_{p}}\left( \Phi \left( A \right)|\Phi \left( B \right) \right).
\end{equation}
Utilizing \eqref{25}, we have the following counterpart of \eqref{51}:
\begin{theorem}\label{e}
Let $A,B$ be two invertible positive operators. Let $\Phi $ be normalized positive linear map on $\mathcal{B}\left( \mathscr{H} \right)$, then
\begin{equation}\label{26}
\begin{aligned}
  & \frac{2r}{p}\left( \Phi \left( A\nabla B \right)-\Phi \left( A \right)\#\Phi \left( B \right) \right)+{{T}_{p}}\left( \Phi \left( A \right)|\Phi \left( B \right) \right) \\ 
 & \le \Phi \left( B-A \right) \\ 
 & \le \frac{2R}{p}\left( \Phi \left( A\nabla B \right)-\Phi \left( A\#B \right) \right)+\Phi \left( {{T}_{p}}\left( A|B \right) \right), \\ 
\end{aligned}
\end{equation}
where $p\in \left( 0,1 \right]$, $r=\min \left\{ p ,1-p  \right\}$ and $R=\max \left\{ p,1-p \right\}$.
\end{theorem}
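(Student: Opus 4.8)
The plan is to start from the Kittaneh–Manasrah refinement and reverse of Young's inequality \eqref{25}, specialize it to the scalar setting needed for operators, and then transfer it to the operator level by the functional-calculus substitution $x = A^{-1/2}BA^{-1/2}$ followed by congruence with $A^{1/2}$, exactly in the spirit of the proof of Theorem \ref{9}. The target inequality \eqref{26} splits into two one-sided inequalities, and since the roles of $r$ and $R$ are symmetric it suffices to explain the lower bound; the upper bound follows the same pattern with $R$ in place of $r$.

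First I would rewrite \eqref{25} with $a=1$, $b=x$ so that the refinement reads
\[
r(\sqrt{x}-1)^2 + x^p \le (1-p) + px,
\]
and then divide by $p$ and rearrange to isolate the Tsallis-type expression $\tfrac{x^p-1}{p}$. The crucial algebraic observation is that $(\sqrt{x}-1)^2 = (1+x)/1 - 2\sqrt{x}$ up to scaling, which at the operator level becomes $A\nabla B - A\#B$ after the congruence transformation (recall $A\#B = A^{1/2}(A^{-1/2}BA^{-1/2})^{1/2}A^{1/2}$ and $A\nabla B = \tfrac{1}{2}(A+B)$). Thus the scalar factor $2r(\sqrt{x}-1)^2/p$ will translate into $\tfrac{2r}{p}(A\nabla B - A\#B)$, matching the first term in the lower bound of \eqref{26}. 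After performing the substitution and the congruence, I would obtain an operator inequality for $T_p(A|B)$ itself, and then apply the positive linear map $\Phi$ together with the normalization $\Phi(I)=I$.

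The transfer through $\Phi$ is where the two standard one-variable facts must be combined: monotonicity of $\Phi$ (which preserves the operator inequality) and the Ando-type inequality $\Phi(A\#B)\le \Phi(A)\#\Phi(B)$ from \eqref{24}. The point is that on the \emph{lower} bound one uses $\Phi(A)\#\Phi(B)$ in the arithmetic-minus-geometric term while the Tsallis term is pushed through via information monotonicity \eqref{51}, $\Phi(T_p(A|B))\le T_p(\Phi(A)|\Phi(B))$; on the \emph{upper} bound one instead keeps $\Phi(A\#B)$ and $\Phi(T_p(A|B))$. So I would be careful to apply \eqref{24} and \eqref{51} in the correct direction for each side, since this is precisely what produces the asymmetry between $\Phi(A)\#\Phi(B)$ and $\Phi(A\#B)$ in the two bounds of \eqref{26}.

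The main obstacle I anticipate is \textbf{bookkeeping of the inequality directions under $\Phi$}, not any deep estimate: the Kittaneh–Manasrah inequality gives a clean two-sided scalar bound, but threading the geometric mean through $\Phi$ requires invoking Ando's inequality \eqref{24} with one sign on the lower bound and the reverse use of information monotonicity \eqref{51} on the upper bound, and one must verify that the normalization $\Phi(I)=I$ is exactly what lets the constant terms $(1-p)+px$ collapse correctly into $\Phi(B-A)$ after dividing by $p$ and rearranging. Once the directions are fixed, the remaining manipulations—clearing the factor $1/p$, identifying $A\nabla B - A\#B$, and collecting terms—are routine, so I would keep the write-up focused on stating the scalar inequality, performing the substitution, and then carefully applying $\Phi$ to each side separately.
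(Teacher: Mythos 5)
Your scalar reduction of \eqref{25} (take $a=1$, $b=x$, note $(\sqrt{x}-1)^2=2\left(\frac{1+x}{2}-\sqrt{x}\right)$, divide by $p$, substitute $x=A^{-1/2}BA^{-1/2}$ and conjugate by $A^{1/2}$) is the right start, and your upper bound is fine: applying $\Phi$ to the operator inequality $B-A\le \frac{2R}{p}\left(A\nabla B-A\#B\right)+T_p(A|B)$ gives the right-hand side of \eqref{26} directly. The gap is in the lower bound. You propose to apply $\Phi$ to
\[
\frac{2r}{p}\left(A\nabla B-A\#B\right)+T_p(A|B)\le B-A
\]
and then convert $\Phi(A\#B)$ into $\Phi(A)\#\Phi(B)$ via Ando's inequality \eqref{24} and $\Phi(T_p(A|B))$ into $T_p(\Phi(A)|\Phi(B))$ via information monotonicity \eqref{51}. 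The second conversion goes the wrong way: \eqref{51} says $\Phi(T_p(A|B))\le T_p(\Phi(A)|\Phi(B))$, so replacing $\Phi(T_p(A|B))$ by $T_p(\Phi(A)|\Phi(B))$ \emph{increases} the left-hand side of an inequality of the form $\mathrm{LHS}\le\Phi(B-A)$, and the desired bound does not follow. (The Ando substitution decreases the left-hand side, so the two replacements pull in opposite directions and their net effect is indeterminate; in particular the difference $T_p(\Phi(A)|\Phi(B))-\Phi(T_p(A|B))$ is nonnegative and is not controlled by $\frac{2r}{p}\left(\Phi(A)\#\Phi(B)-\Phi(A\#B)\right)$.)

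The fix, which is what the paper does (mimicking Corollary \ref{7}), is not to push the $(A,B)$-inequality through $\Phi$ on the lower side at all, but to apply the operator form of \eqref{25} directly to the pair $(\Phi(A),\Phi(B))$:
\[
2r\left(\Phi(A)\nabla\Phi(B)-\Phi(A)\#\Phi(B)\right)+\Phi(A)\#_p\Phi(B)\le\Phi(A)\nabla_p\Phi(B),
\]
then use linearity of $\Phi$ to rewrite $\Phi(A)\nabla\Phi(B)=\Phi(A\nabla B)$ and $\Phi(A)\nabla_p\Phi(B)=\Phi(A\nabla_p B)$, subtract $\Phi(A)$, and divide by $p$. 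With this route neither Ando's inequality nor information monotonicity is needed anywhere; only linearity of $\Phi$ is used, and the collapse of $(1-p)+px$ into $\Phi(B-A)$ after dividing by $p$ is pure linearity rather than a consequence of the normalization $\Phi(I)=I$.
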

\begin{proof}
Using the method of the proof of Proposition \ref{prop} and Corollary \ref{7} for the inequality \eqref{25}, we can obtain that 
\[\begin{aligned}
  & 2r\left( \Phi \left( A\nabla B \right)-\Phi \left( A \right)\#\Phi \left( B \right) \right)+\Phi \left( A \right){{\#}_{p}}\Phi \left( B \right) \\ 
 & \le \Phi \left( A{{\nabla }_{p}}B \right) \\ 
 & \le 2R\left( \Phi \left( A\nabla B \right)-\Phi \left( A\#B \right) \right)+\Phi \left( A{{\#}_{p}}B \right). \\ 
\end{aligned}\]
A simple calculation shows that
\[\begin{aligned}
  & \frac{2r}{p}\left( \Phi \left( A\nabla B \right)-\Phi \left( A \right)\#\Phi \left( B \right) \right)+\frac{\Phi \left( A \right){{\#}_{p}}\Phi \left( B \right)-\Phi \left( A \right)}{p} \\ 
 & \le \Phi \left( B-A \right) \\ 
 & \le \frac{2R}{p}\left( \Phi \left( A\nabla B \right)-\Phi \left( A\#B \right) \right)+\frac{\Phi \left( A{{\#}_{p}}B \right)-\Phi \left( A \right)}{p}. \\ 
\end{aligned}\]
Apparently, the above inequality is equivalent to inequality \eqref{26}. The proof is completed.
\end{proof}
\vskip 0.3 true cm
The following example may render the above statement clearer.
\begin{example}
Let $\Phi \left( X \right)={{U}^{*}}XU\left( X\in {{\mathscr{M}}_{2}} \right)$ where 	$U=\left( \begin{matrix}
   \frac{\sqrt{2}}{2} & \frac{\sqrt{2}}{2}  \\
   -\frac{\sqrt{2}}{2} & \frac{\sqrt{2}}{2}  \\
\end{matrix} \right)$. If $A=\left( \begin{matrix}
   2 & -1  \\
   -1 & 1  \\
\end{matrix} \right)$ and $B=\left( \begin{matrix}
   6 & 2  \\
   2 & 4  \\
\end{matrix} \right)$ and $p=\frac{1}{4}$ we get
\[\frac{2r}{p}\left( \Phi \left( A\nabla B \right)-\Phi \left( A \right)\#\Phi \left( A \right) \right)+\left( {{T}_{p}}\left( \Phi \left( A \right)|\Phi \left( A \right) \right) \right)= \left( \begin{matrix}
   0.486 & 0.443  \\
   0.443 & 5.638  \\
\end{matrix} \right)\]

\[\Phi \left( B-A \right)=\left( \begin{matrix}
   0.5 & 0.5  \\
   0.5 & 6.5  \\
\end{matrix} \right)\]
and
\[\frac{2R}{p}\left( \Phi \left( A\nabla B \right)-\Phi \left( A\#B \right) \right)+\Phi \left( {{T}_{p}}\left( A|B \right) \right)= \left( \begin{matrix}
   0.562 & 0.951  \\
   0.951 & 13.521  \\
\end{matrix} \right)\]
which shows that  inequality \eqref{26} is true.
\end{example}
\vskip 0.3 true cm

Tsallis relative entropy ${{D}_{p}}\left( A||B \right)$ for two positive operators $A$ and $B$ is defined by:
\[{{D}_{p}}\left( A||B \right):=\frac{Tr\left[ A \right]-Tr\left[ {{A}^{1-p}}{{B}^{p}} \right]}{p},\quad p\in \left( 0,1 \right].\]
In information theory, relative entropy (divergence) is usually defined for density operators which are positive operators with unit trace. However we consider Tsallis relative entropy defined for positive operators to derive the relation with Tsallis relative operator entropy.
If $A$ and $B$ are positive operators, then
\begin{equation}\label{54}
Tr\left[ A-B \right]\le {{D}_{p}}\left( A||B \right)\le -Tr\left[ {{T}_{p}}\left( A|B \right) \right],\quad p\in \left( 0,1 \right].
\end{equation}
Note that the first inequality of \eqref{54} is due to Furuta {{\cite[Proposition F]{9}}} and the second inequality is due to Furuichi {\it et al.} {{\cite[Theorem 2.2]{22}}}. 

As a direct consequence of Theorem \ref{e}, we have the following interesting relation.
\begin{corollary}
Let $A,B$ be two positive operators on a finite dimensional Hilbert space $\mathscr{H}$, then
\begin{equation}\label{56}
\begin{aligned}
  & \frac{2R}{p}\left( Tr\left[ A\#B \right]-\frac{Tr\left[ A+B \right]}{2} \right)-Tr\left[ {{T}_{p}}\left( A|B \right) \right] \\ 
 & \le Tr\left[ A-B \right] \\ 
 & \le \frac{2r}{p}\left( \sqrt{Tr\left[ A \right]Tr\left[ B \right]}-\frac{Tr\left[ A+B \right]}{2} \right)+{{D}_{p}}\left( A||B \right), \\ 
\end{aligned}
\end{equation}
where $p\in \left( 0,1 \right]$, $r=\min \left\{ p ,1-p  \right\}$ and $R=\max \left\{ p,1-p \right\}$.
\end{corollary}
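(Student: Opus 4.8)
The plan is to specialize Theorem \ref{e} to a single, carefully chosen normalized positive linear map and then post-process the two resulting scalar inequalities. Writing $n=\dim\mathscr{H}$, I would take $\Phi:\mathcal{B}(\mathscr{H})\to\mathbb{C}$ to be the normalized trace $\Phi(X)=\frac{1}{n}Tr[X]$. This map is linear, positive, and unital, since $\Phi(I)=\frac{1}{n}Tr[I]=1$, so Theorem \ref{e} applies to it. I would first argue for invertible $A,B$ (so that every mean below is defined) and then recover the general positive case stated in the corollary by a continuity argument, replacing $A,B$ with $A+\varepsilon I,\,B+\varepsilon I$ and letting $\varepsilon\downarrow 0$.

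Next I would compute each term produced by $\Phi$. Because the codomain is $\mathbb{C}$, the operator means that act on the \emph{images} collapse to elementary scalar expressions, while the means that are taken \emph{before} applying $\Phi$ stay as traces of operator means. Concretely, $\Phi(A\nabla B)=\frac{1}{2n}Tr[A+B]$, $\Phi(A)\#\Phi(B)=\frac{1}{n}\sqrt{Tr[A]\,Tr[B]}$, and $T_p(\Phi(A)|\Phi(B))=\frac{1}{np}\bigl((Tr[A])^{1-p}(Tr[B])^{p}-Tr[A]\bigr)$, whereas $\Phi(A\#B)=\frac{1}{n}Tr[A\#B]$ and $\Phi(T_p(A|B))=\frac{1}{n}Tr[T_p(A|B)]$. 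Substituting these into \eqref{26} and multiplying through by $n$ clears the dimension and yields a two-sided scalar inequality that brackets $Tr[B-A]$: the upper bound carries $R$, $Tr[A\#B]$ and $Tr[T_p(A|B)]$, while the lower bound carries $r$, $\sqrt{Tr[A]\,Tr[B]}$ and $(Tr[A])^{1-p}(Tr[B])^{p}$.

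For the left inequality of \eqref{56} I would take the upper half of that bracket and simply multiply by $-1$, using $Tr[A-B]=-Tr[B-A]$, to turn it into a lower bound for $Tr[A-B]$; this reproduces the claimed left inequality verbatim with no further input. For the right inequality I would negate the lower half of the bracket in the same way. After the sign flip the Tsallis term appears as $\frac{1}{p}\bigl(Tr[A]-(Tr[A])^{1-p}(Tr[B])^{p}\bigr)$, whereas the definition of $D_p(A\|B)$ furnishes $\frac{1}{p}\bigl(Tr[A]-Tr[A^{1-p}B^{p}]\bigr)$, so these two quantities must be compared.

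The one genuinely extra ingredient, and the main obstacle, is bridging that last gap. I would invoke the trace Hölder inequality $Tr[A^{1-p}B^{p}]\le (Tr[A])^{1-p}(Tr[B])^{p}$, valid for positive $A,B$ and $p\in[0,1]$ (apply Hölder with the conjugate exponents $\tfrac{1}{1-p},\tfrac{1}{p}$ to $A^{1-p}$ and $B^{p}$). Since $p>0$, this yields $\frac{1}{p}\bigl(Tr[A]-(Tr[A])^{1-p}(Tr[B])^{p}\bigr)\le D_p(A\|B)$, so replacing the scalar geometric term by $D_p(A\|B)$ only enlarges the right-hand side and the desired right inequality of \eqref{56} follows. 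The delicate point is purely one of bookkeeping: one must check that the Hölder substitution is applied in the direction that \emph{weakens} the bound, which is exactly why it is essential that this term enters only after the sign flip and with the positive factor $1/p$.
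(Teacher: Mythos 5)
Your proposal is correct and follows essentially the same route as the paper: specialize Theorem \ref{e} to the normalized trace $\Phi(X)=\frac{1}{\dim\mathscr{H}}Tr[X]$, negate the resulting two-sided bound on $Tr[B-A]$, and then pass from $\frac{1}{p}\bigl(Tr[A]-(Tr[A])^{1-p}(Tr[B])^{p}\bigr)$ to $D_p(A\|B)$ via $Tr[A^{1-p}B^{p}]\le (Tr[A])^{1-p}(Tr[B])^{p}$. Your explicit justification of that trace inequality by H\"older and the $\varepsilon$-perturbation to handle non-invertible positive operators are details the paper leaves implicit, but the argument is the same.
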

\begin{proof}
Taking $\Phi \left( X \right)=\frac{1}{\dim \mathscr{H}}Tr\left[ X \right]$ in \eqref{14}, we have
\begin{equation}\label{20}
\begin{aligned}
  & \frac{2r}{p}\left( \frac{Tr\left[ A+B \right]}{2}-\sqrt{Tr\left[ A \right]Tr\left[ B \right]} \right)+\frac{{{\left( Tr\left[ A \right] \right)}^{1-p}}{{\left( Tr\left[ B \right] \right)}^{p}}-Tr\left[ A \right]}{p} \\ 
 & \le Tr\left[ B-A \right] \\ 
 & \le \frac{2R}{p}\left( \frac{Tr\left[ A+B \right]}{2}-Tr\left[ A\#B \right] \right)+Tr\left[ {{T}_{p}}\left( A|B \right) \right]. \\ 
\end{aligned}
\end{equation}
It is not too difficult to see that \eqref{20} can be also reformulated in the following way
\[\begin{aligned}
  & \frac{2R}{p}\left( Tr\left[ A\#B \right]-\frac{Tr\left[ A+B \right]}{2} \right)-Tr\left[ {{T}_{p}}\left( A|B \right) \right] \\ 
 & \le Tr\left[ A-B \right] \\ 
 & \le \frac{2r}{p}\left( \sqrt{Tr\left[ A \right]Tr\left[ B \right]}-\frac{Tr\left[ A+B \right]}{2} \right)+\frac{Tr\left[ A \right]-{{\left( Tr\left[ A \right] \right)}^{1-p}}{{\left( Tr\left[ B \right] \right)}^{p}}}{p}. \\ 
\end{aligned}\]
Now, having in mind that $Tr\left[ {{A}^{1-p}}{{B}^{p}} \right]\le {{\left( Tr\left[ A \right] \right)}^{1-p}}{{\left( Tr\left[ B \right] \right)}^{p}}$, we infer
\[\begin{aligned}
  & \frac{2R}{p}\left( Tr\left[ A\#B \right]-\frac{Tr\left[ A+B \right]}{2} \right)-Tr\left[ {{T}_{p}}\left( A|B \right) \right] \\ 
 & \le Tr\left[ A-B \right] \\ 
 & \le \frac{2r}{p}\left( \sqrt{Tr\left[ A \right]Tr\left[ B \right]}-\frac{Tr\left[ A+B \right]}{2} \right)+\frac{Tr\left[ A \right]-Tr\left[ {{A}^{1-p}}{{B}^{p}} \right]}{p}, \\ 
\end{aligned}\]
which, in turn, leads to \eqref{56}.
\end{proof}

\begin{remark}
If $A$ and $B$ are density operators, then the second inequality of (\ref{56}) implies the non-negativity of Tsallis relative entropy, $D_p(A||B) \geq 0$.
If $p \in [\frac{1}{2},1]$, then the first inequality of (\ref{56}) implies
$$
Tr[T_{1/2}(A|B)] \leq Tr[T_{p}(A|B)].
$$
If $p \in (0,\frac{1}{2}]$, then  the first inequality of (\ref{56}) also implies
$$
(1-p) Tr[T_{1/2}(A|B)] +(2p-1) Tr[B-A] \leq p  Tr[T_{p}(A|B)].
$$
\end{remark}



\end{document}